\documentclass[global,a4paper]{article}%
\usepackage{amsfonts}
\usepackage{amsmath}
\usepackage{amssymb}
\usepackage{graphicx}%
\setcounter{MaxMatrixCols}{30}
\textheight 23cm \textwidth 6.5in \leftmargin 0.10in \rightmargin
0.10in \topmargin -.50in \headsep .7in \evensidemargin 0.10in
\oddsidemargin 0.10in
\newtheorem{theorem}{Theorem}

\newtheorem{lemma}[theorem]{Lemma}

\newtheorem{remark}[theorem]{Remark}

\newenvironment{proof}[1][Proof]{\noindent\textbf{#1.} }{\ \rule{0.5em}{0.5em}}
\begin{document}

\title{Korovkin type theorem and iterates of certain positive linear opeartors}
\author{N. I. Mahmudov\\Department of Mathematics\\Eastern Mediterranean University \\Gazimagusa, TRNC via Mersin 10, Turkey \\email: nazim.mahmudov@emu.edu.tr}
\maketitle

\begin{abstract}
In this paper we prove Korovkin type theorem for iterates of general positive
linear operators $T:C\left[  0,1\right]  \rightarrow C\left[  0,1\right]  $
and derive quantitative estimates in terms of modulus of smoothness. In
particular, we show that under some natural conditions the iterates
$T^{m}:C\left[  0,1\right]  \rightarrow C\left[  0,1\right]  $ converges
strongly to a fixed point of the original operator $T$. The results can be
applied to several well-known operators; we present here the $q$-MKZ
operators, the $q$-Stancu operators, the genuine $q$-Bernstein--Durrmeyer
operators and the Cesaro operators.

\end{abstract}

\textbf{Keywords.} Iterates of operators, degree of approximation,
$K$-functionals, modulus of smoothness, Bernstein operators, genuine
Bernstein--Durrmeyer operators, Stancu operators, Korovkin type theorem,
Cesaro operators, Meyer-K\H{o}nig and Zeller operators.

\section{Introduction}

These iterated Bernstein operators were investigated in the 60's and 70's by
P. C. Sikkema \cite{sikkema}, R. P. Kelisky \& T. J. Rivlin \cite{kelisky}, S.
Karlin \& Z. Ziegler \cite{karlin}, J. Nagel \cite{nagel1}, M. R. da Silva
\cite{dasilva1} and Gonska \cite{gonska1}, \cite{gonska2}. Some of this
research was later generalized by Altomare et al. (see, for example,
\cite{Alt1}, \cite{Alt2}, \cite{Alt3}). Altomare suggested to use in this
context an approach described by Dickmeis and Nessel \cite{nessel}. This was
done recently by Rasa in \cite{rasa1} and \cite{rasa2}. Another new paper
related to the subject of this article was written by S. Ostrovska
\cite{ostr1} on iterates of $q$-Bernstein polynomials.

The methods employed to study the convergence of iterates of some operators
occurring in Approximation Theory include Matrix Theory methods, like
stochastic matrices \cite{niel}, \cite{dasilva2}, \cite{wenz}, Korovkin-type
theorems \cite{karlin}, quantitative results about the approximation of
functions by positive linear operators \cite{gonska3}, \cite{gonska4}, fixed
point theorems \cite{agratini}, \cite{gonska5}, \cite{rus}, or methods from
the theory of $C_{0}$-semigroups, like Trotter's approximation theorem
\cite{karlin}, \cite{micchelli2}. However, these techniques fail to work for
the Meyer-K\H{o}nig and Zeller (MKZ) or the May operators. Very recently, I.
Gavrea and M. Ivan \cite{GI} proved that the iterates of the MKZ operates
converges strongly to $P\left(  f;x\right)  =\left(  1-x\right)  f\left(
0\right)  +xf\left(  1\right)  $. Once such convergence have been obtained,
the following natural question is to ask for rates of convergence. In Section
3, as a consequence of our results, we obtain the quantitaive estimates for
the iterates of the $q$-MKZ ($0<q\leq1$) operators, which is completely new.

On the other hand, because of its powerful applications, Korovkin's result has
been extended in many directions. There is an extensive literature on
Korovkin-type theorems, which may have had a summit already about twenty five
years ago. In particular, there exist abstract results that cover many
naturally arising concrete cases. The contributions up to about 1994 are
excellently documented in the book of Altomare and Campiti \cite{Alt3}. More
recent results obtained in \cite{wang2}, \cite{mah2} cover also approximation
of $q$-type operators.

In this paper we establish quantitative Korovkin type theorem for the iterates
of certain positive linear operators $T:C\left[  0,1\right]  \rightarrow
C\left[  0,1\right]  .$ As a consequence of our results, we obtain the
quantitaive estimates for the iterates of almost all classical and new
positive linear operators: the $q$-MKZ operators, the $q$-Stancu operators,
the genuine $q$-Bernstein--Durrmeyer operators in the case $0<q\leq1$ and the
Cesaro operators. It is worth mentioning that for $q=1$ these operators become
classical MKZ, Stancu and genuine Bernstein-Durrmeyer operators.

\section{Main results}

The following notations will be used throughout this paper. The classical
Petree's $K$-functional and the second modulus of smoothness of a function $f$
are defined respectively by
\[
K_{2}\left(  f,t\right)  :=\inf_{g\in C^{2}\left[  0,1\right]  }\left\{
\left\Vert f-g\right\Vert +t\left\Vert g^{\prime\prime}\right\Vert \right\}
\]
and%
\[
\omega_{2}(f,t):=\sup_{0<h\leq t}\sup_{0\leq x\leq1-2h}\left\vert
f(x+2h)-2f(x+h)+f(x)\right\vert .
\]
It is known there exists a constant $C>0$ such that%
\begin{equation}
K_{2}\left(  f,t\right)  \leq C\omega_{2}(f,\sqrt{t}). \label{k1}%
\end{equation}
Let $e_{i}:\left[  0,1\right]  \rightarrow R$ be the monomial functions
$e_{i}\left(  x\right)  =x^{i},$ $i=0,1,2.$

Now we formulate the main results of the paper. First result shows that under
the conditions (\ref{cc101}) the iterates of $T:C\left[  0,1\right]
\rightarrow C\left[  0,1\right]  $ converges to some linear positive operator
$T^{\infty}:C\left[  0,1\right]  \rightarrow C\left[  0,1\right]  .$

\begin{theorem}
\label{t:main} Suppose that $T:C\left[  0,1\right]  \rightarrow C\left[
0,1\right]  $ is a positive linear operator such that
\begin{align}
T\left(  e_{0}\right)   &  =e_{0},\ \ T\left(  e_{1};x\right)  \leq
x,\ \ \nonumber\\
\lim_{n\rightarrow\infty}\left\Vert T^{m}\left(  e_{1}\right)  -T^{\infty
}\left(  e_{1}\right)  \right\Vert  &  =\lim_{n\rightarrow\infty}\left\Vert
T^{m}\left(  e_{2}\right)  -T^{\infty}\left(  e_{2}\right)  \right\Vert =0.
\label{cc101}%
\end{align}
Then there exists a linear positive operator $T^{\infty}:C\left[  0,1\right]
\rightarrow C\left[  0,1\right]  $ such that the following pointwise estimate%
\begin{equation}
\left\vert \left(  T^{m}-T^{\infty}\right)  \left(  f;x\right)  \right\vert
\leq k\omega_{2}\left(  f,\sqrt{\lambda_{n}\left(  x\right)  }\right)
+k\left\Vert f\right\Vert \delta_{n}\left(  x\right)  \label{ss1}%
\end{equation}
holds true \textit{for }$x\in\left[  0,1\right]  $\textit{ and }$f\in C\left[
0,1\right]  $\textit{, where }$k$ is an absolute constant and%
\begin{align*}
\lambda_{n}\left(  x\right)   &  =\max\left\{  \left\vert \left(
T^{m}-T^{\infty}\right)  \left(  e_{1};x\right)  \right\vert ,\left\vert
\left(  T^{\infty}-T^{m}\right)  \left(  e_{2};x\right)  \right\vert \right\}
,\\
\delta_{n}\left(  x\right)   &  =\left\vert \left(  T^{m}-T^{\infty}\right)
\left(  e_{1};x\right)  \right\vert .
\end{align*}

\end{theorem}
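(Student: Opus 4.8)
The plan is to read (\ref{ss1}) as a quantitative Korovkin estimate for the \emph{difference} operator $D_m:=T^m-T^\infty$, and to control the action of $D_m$ on an arbitrary $f$ by its action on the three test functions $e_0,e_1,e_2$. Before that I would settle the existence and the algebraic nature of $T^\infty$. From $Te_0=e_0$ and positivity one gets $\|T^m\|=1$ and $T^m e_0=e_0$ for all $m$; from $T(e_1;x)\le x$, positivity and monotonicity give that $m\mapsto T^m(e_1;x)$ is nonincreasing and bounded below by $0$, hence convergent, which together with the assumed convergence of $T^m(e_2)$ supplies $T^\infty(e_1)$ and $T^\infty(e_2)$. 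I would then show that $T^m f$ is uniformly Cauchy for every $f$ by proving the estimate below in the self-contained form that compares $T^{m+p}$ with $T^m$ through the factorization $T^{m+p}-T^m=T^m(I-T^p)$, which needs no preexisting limit; letting $p\to\infty$ then defines $T^\infty f:=\lim_m T^m f$ and breaks any circularity. The limit $T^\infty$ is a positive linear contraction fixing $e_0$, and passing to the limit in $T^{m+1}=T\,T^m$ gives $T\,T^\infty=T^\infty$, whence $(T^\infty)^2=T^\infty$ and $T^m T^\infty=T^\infty$; in particular $D_m=T^m(I-T^\infty)$ and $D_m(e_0)=0$.

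For the estimate I fix $x$ and $f$ and use Peetre's $K$-functional. For each $g\in C^2[0,1]$, writing $f=g+(f-g)$ gives $|D_m(f;x)|\le |D_m(f-g;x)|+|D_m(g;x)|$, and the rough part is immediate from $\|T^m\|=\|T^\infty\|=1$, namely $|D_m(f-g;x)|\le 2\|f-g\|$. For the smooth part I Taylor-expand about $x$ and use $D_m(e_0)=0$ to get $D_m(g;x)=g'(x)\,D_m(e_1;x)+D_m(G_x;x)$, where $G_x(t)=g(t)-g(x)-g'(x)(t-x)$ satisfies $G_x(x)=G_x'(x)=0$ and $|G_x(t)|\le\tfrac12\|g''\|(t-x)^2$. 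The first term is bounded by $\|g'\|\,\delta_n(x)$. Since $\delta_n(x)\le\lambda_n(x)$ by the very definition of $\lambda_n$, a Landau--Kolmogorov inequality of the form $\|g'\|\le C(\|g\|+\|g''\|)$ lets me split this into a $\|f\|\,\delta_n(x)$ contribution and a $\|g''\|\,\lambda_n(x)$ contribution. The target for the remaining piece $D_m(G_x;x)$ is the bound $|D_m(G_x;x)|\le C\|g''\|\,\lambda_n(x)$; granting it, the infimum over $g$ turns $2\|f-g\|+C\|g''\|\lambda_n(x)$ into $C\,K_2(f,\lambda_n(x))$, which by (\ref{k1}) is at most $C'\omega_2(f,\sqrt{\lambda_n(x)})$, yielding (\ref{ss1}).

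The heart of the matter, and the step I expect to be hardest, is precisely $|D_m(G_x;x)|\le C\|g''\|\lambda_n(x)$. The naive estimate $|D_m(G_x;x)|\le T^m(|G_x|;x)+T^\infty(|G_x|;x)\le\tfrac12\|g''\|\big(T^m((e_1-x)^2;x)+T^\infty((e_1-x)^2;x)\big)$ is useless here: the \emph{sum} of the two second moments does not tend to $0$ as $m\to\infty$ (for the MKZ limit it tends to $2x(1-x)$), whereas the left-hand side of (\ref{ss1}) must vanish. The difficulty is structural, since $D_m$ is only a \emph{difference} of positive operators, so the classical Shisha--Mond device of applying a positive operator to the pointwise inequality $|G_x(t)|\le\tfrac12\|g''\|(t-x)^2$ is unavailable, and a purely moment-based bound for the associated signed measure $\mu^{(m)}_x-\mu^\infty_x$ is in general false. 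What must be exploited is the special relation between $T^m$ and its own limit. The quantity that is actually small is the \emph{difference} of the central second moments, $D_m((e_1-x)^2;x)=D_m(e_2;x)-2x\,D_m(e_1;x)$, which is bounded by $3\lambda_n(x)$; the task is to show that, thanks to $D_m=T^m(I-T^\infty)$ and the monotonicity $T^m(e_1;x)\downarrow T^\infty(e_1;x)$, the stationary part $T^\infty((e_1-x)^2;x)$ cancels so that only this difference governs $D_m(G_x;x)$. Converting that cancellation into the clean inequality $|D_m(G_x;x)|\le C\|g''\|\lambda_n(x)$ is the technical core; everything else is routine $K$-functional bookkeeping.
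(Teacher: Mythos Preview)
Your outline is correct up to the point you yourself flag as the crux, and then it stops one idea short. You reduce everything to the bound $|D_m(G_x;x)|\le C\|g''\|\lambda_n(x)$, observe that the naive positivity argument fails because $D_m$ is signed, and end by saying that ``converting that cancellation into the clean inequality\ldots\ is the technical core.'' That core idea is not supplied. The factorization $D_m=T^m(I-T^\infty)$ and the scalar monotonicity $T^m(e_1;x)\downarrow$ do not by themselves control $D_m$ on the Taylor remainder $G_x$, because $(I-T^\infty)G_x$ has no useful sign or shape.

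The device the paper uses is different from your Taylor-at-$x$ decomposition and is exactly what closes this gap: instead of trying to bound $D_m$ on the remainder of a single $g$, one embeds $g$ into a \emph{pair} of globally nonincreasing convex functions
\[
g_\pm(t)=\tfrac12\|g''\|(1-t)^2+\|g'\|(1-t)\pm g(t),
\]
and observes that for any nonincreasing convex $h$ the hypothesis $T(e_1;x)\le x$ together with the supporting-line inequality gives $T(h;x)\ge h(x)$, hence $m\mapsto T^m(h;x)$ is nondecreasing. Thus $(T^{m+p}-T^m)(g_\pm;x)\ge 0$ for \emph{both} signs, and expanding $g_\pm$ in terms of $e_0,e_1,e_2$ immediately yields
\[
|(T^{m+p}-T^m)(g;x)|\le \tfrac12\|g''\|\,|(T^{m+p}-T^m)(e_2;x)|+(\|g''\|+\|g'\|)\,|(T^{m+p}-T^m)(e_1;x)|.
\]
This is the sought cancellation: the signed operator $T^{m+p}-T^m$ is trapped between two nonnegative quantities built from $e_1,e_2$ alone, with no surviving ``stationary'' second moment. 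Letting $p\to\infty$, applying Landau--Kolmogorov to $\|g'\|$, and doing the $K$-functional bookkeeping you already describe then gives (\ref{ss1}). The monotonicity you noted for $T^m(e_1)$ is the $g=e_1$ special case of this; the missing step is to realize it extends to every nonincreasing convex function and to exploit that via $g_\pm$.
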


\begin{proof}
For every nonincreasing convex $g\in C^{2}\left[  0,1\right]  ,$ we have
\begin{align}
g\left(  t\right)   &  \geq g\left(  x\right)  +g^{\prime}\left(  x\right)
\left(  t-x\right)  ,\nonumber\\
T\left(  g;x\right)   &  \geq g\left(  x\right)  +g^{\prime}\left(  x\right)
\left(  T\left(  e_{1};x\right)  -x\right)  \geq g\left(  x\right)  .
\label{ss2}%
\end{align}
It follows that
\[
g\left(  x\right)  \leq T^{m}\left(  g;x\right)  \leq T^{m+1}\left(
g;x\right)  .
\]
In other words the sequence $\left\{  T^{m}\left(  g;x\right)  \right\}  $ is
nondecreasing for any nonincreasing convex $g\in C^{2}\left[  0,1\right]  $
and $x\in\left[  0,1\right]  $.

Let $x\in\left[  0,1\right]  $ be fixed and $g\in C^{2}\left[  0,1\right]  $
be arbitrary. Introduce the following auxiliary functions%
\[
g_{\pm}\left(  t\right)  =\frac{1}{2}\left\Vert g^{\prime\prime}\right\Vert
\left(  1-t\right)  ^{2}+\left\Vert g^{\prime}\right\Vert \left(  1-t\right)
\pm g\left(  t\right)  .
\]
It is clear that
\[
g_{\pm}^{\prime}\left(  t\right)  =-\left\Vert g^{\prime\prime}\right\Vert
\left(  1-t\right)  -\left\Vert g^{\prime}\right\Vert \pm g\left(  t\right)
\leq0,\ \ \ g_{\pm}^{\prime\prime}\left(  t\right)  =\left\Vert g^{\prime
\prime}\right\Vert \pm g\left(  t\right)  \geq0.
\]
Therefore the functions $g_{\pm}\left(  t\right)  $ are nonincreasing convex
for both choices of the sign. Since $\left(  T^{m+p}-T^{m}\right)  \left(
g_{\pm};x\right)  $ is positive we have%
\begin{align*}
0  &  \leq\left(  T^{m+p}-T^{m}\right)  \left(  g_{\pm};x\right)  =\frac{1}%
{2}\left\Vert g^{\prime\prime}\right\Vert \left(  T^{m+p}-T^{m}\right)
\left(  \left(  e_{0}-e_{1}\right)  ^{2};x\right) \\
&  +\left\Vert g^{\prime}\right\Vert \left(  T^{m+p}-T^{m}\right)  \left(
e_{0}-e_{1};x\right)  \pm\left(  T^{m+p}-T^{m}\right)  \left(  g;x\right)  .
\end{align*}
It follows that%
\begin{equation}
\left\vert \left(  T^{m+p}-T^{m}\right)  \left(  g;x\right)  \right\vert
\leq\frac{1}{2}\left\Vert g^{\prime\prime}\right\Vert \left\vert \left(
T^{m+p}-T^{m}\right)  \left(  e_{2};x\right)  \right\vert +\left(  \left\Vert
g^{\prime\prime}\right\Vert +\left\Vert g^{\prime}\right\Vert \right)
\left\vert \left(  T^{m}-T^{m+p}\right)  \left(  e_{1};x\right)  \right\vert .
\label{ss3}%
\end{equation}
So $\left\{  T^{m}\left(  f;x\right)  \right\}  $ is a Cauchy sequence in
$C\left[  0,1\right]  $ and there is a linear positive operator $T^{\infty
}\left(  f\right)  $ such that%
\[
\lim_{m\rightarrow\infty}\left\Vert T^{m}\left(  f\right)  -T^{\infty}\left(
f\right)  \right\Vert =0
\]
for any $f\in C\left[  0,1\right]  .$ Taking the limit as $p\rightarrow\infty$
in (\ref{ss3}) and using the well known inequality%
\[
\left\Vert g^{\prime}\right\Vert \leq C_{1}\left(  \left\Vert g\right\Vert
+\left\Vert g^{\prime\prime}\right\Vert \right)
\]
we have%
\begin{align}
\left\vert \left(  T^{\infty}-T^{m}\right)  \left(  g;x\right)  \right\vert
&  \leq\frac{1}{2}\left\Vert g^{\prime\prime}\right\Vert \left\vert \left(
T^{\infty}-T^{m}\right)  \left(  e_{2};x\right)  \right\vert +\left(
\left\Vert g^{\prime\prime}\right\Vert +\left\Vert g^{\prime}\right\Vert
\right)  \left\vert \left(  T^{m}-T^{\infty}\right)  \left(  e_{1};x\right)
\right\vert \nonumber\\
&  \leq\left(  \frac{3}{2}+C_{1}\right)  \lambda_{n}\left(  x\right)
\left\Vert g^{\prime\prime}\right\Vert +C_{1}\delta_{n}\left(  x\right)
\left\Vert g\right\Vert . \label{ss4}%
\end{align}
Taking into account that $\left\Vert T^{m}\right\Vert =1$ from (\ref{ss4}) and
the inequality $\left\Vert g\right\Vert \leq\left\Vert f\right\Vert
+\left\Vert f-g\right\Vert $ it follows that%
\begin{align*}
\left\vert T^{\infty}\left(  f;x\right)  -T^{m}\left(  f;x\right)
\right\vert  &  \leq\left\vert \left(  T^{\infty}-T^{m}\right)  \left(
f-g;x\right)  \right\vert +\left\vert T^{\infty}\left(  g;x\right)
-T^{m}\left(  g;x\right)  \right\vert \\
&  \leq2\left\Vert f-g\right\Vert +\left(  \frac{3}{2}+C_{1}\right)
\lambda_{n}\left(  x\right)  \left\Vert g^{\prime\prime}\right\Vert
+C_{1}\delta_{n}\left(  x\right)  \left\Vert g\right\Vert \\
&  \leq C_{2}\left(  \left\Vert f-g\right\Vert +\lambda_{n}\left(  x\right)
\left\Vert g^{\prime\prime}\right\Vert \right)  +C_{4}\delta_{n}\left(
x\right)  \left\Vert f\right\Vert
\end{align*}
Taking on the right side the infimum over all $g\in C^{2}[0,1]$ we obtain
\[
\left\vert T^{\infty}\left(  f;x\right)  -T^{m}\left(  f;x\right)  \right\vert
\leq C_{2}K_{2}\left(  f;\frac{1}{4}\left\vert \left(  T^{\infty}%
-T^{m}\right)  \left(  e_{2};x\right)  \right\vert \right)  +C_{4}\delta
_{n}\left(  x\right)  \left\Vert f\right\Vert .
\]
Now using (\ref{k1}) we obtain (\ref{ss1}).
\end{proof}

\begin{remark}
It is clear that for any $f\in C\left[  0,1\right]  $ the the image
$T^{\infty}\left(  f\right)  $ is a fixed point of the original operator
$T:C\left[  0,1\right]  \rightarrow C\left[  0,1\right]  .$ It gives some
information on the nature of the limit operator $T^{\infty}$.
\end{remark}

If $T:C\left[  0,1\right]  \rightarrow C\left[  0,1\right]  $ preserves the
affine functions as a consequence of the above theorem we have the following result.

\begin{theorem}
\label{thm1} Suppose that $T:C\left[  0,1\right]  \rightarrow C\left[
0,1\right]  $ is a positive linear operator such that
\begin{equation}
T\left(  e_{0}\right)  =e_{0},\ \ T\left(  e_{1};x\right)  =x,\ \ \ \lim
_{n\rightarrow\infty}\left\Vert T^{m}\left(  e_{2}\right)  -T^{\infty}\left(
e_{2}\right)  \right\Vert =0. \label{cc10}%
\end{equation}
Then there exists a linear positive operator $T^{\infty}:C\left[  0,1\right]
\rightarrow C\left[  0,1\right]  $ such that the following pointwise estimate%
\begin{equation}
\left\vert \left(  T^{m}-T^{\infty}\right)  \left(  f;x\right)  \right\vert
\leq k\omega_{2}\left(  f,\frac{1}{2}\sqrt{\left\vert \left(  T^{\infty}%
-T^{m}\right)  \left(  e_{2};x\right)  \right\vert }\right)  \label{rr1}%
\end{equation}
holds true \textit{for }$x\in\left[  0,1\right]  $\textit{ and }$f\in C\left[
0,1\right]  $\textit{, where }$k$ is an absolute constant.
\end{theorem}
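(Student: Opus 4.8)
The plan is to obtain Theorem \ref{thm1} as a direct specialization of Theorem \ref{t:main}. The only difference in the hypotheses is that here $T$ reproduces $e_1$ exactly, $T(e_1;x)=x$, rather than merely satisfying $T(e_1;x)\leq x$; since the equality is the stronger condition, the assumptions (\ref{cc101}) of Theorem \ref{t:main} hold automatically (the convergence $\lim\left\Vert T^m(e_1)-T^\infty(e_1)\right\Vert=0$ being trivial), so the pointwise estimate (\ref{ss1}) is available and only needs to be simplified.

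First I would observe that $T(e_0)=e_0$ together with $T(e_1)=e_1$ forces every iterate to fix the affine functions: $T^m(e_1)=e_1$ for all $m$, and passing to the limit gives $T^\infty(e_1)=e_1$ as well. Consequently
\[
\delta_n(x)=\left\vert(T^m-T^\infty)(e_1;x)\right\vert=\left\vert x-x\right\vert=0,
\]
so that, in the definition of $\lambda_n$, the first entry of the maximum vanishes and
\[
\lambda_n(x)=\max\left\{0,\left\vert(T^\infty-T^m)(e_2;x)\right\vert\right\}=\left\vert(T^\infty-T^m)(e_2;x)\right\vert.
\]

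The cleanest route to the precise constant $\tfrac12$ in (\ref{rr1}) is not to quote (\ref{ss1}) verbatim --- after setting $\delta_n\equiv0$ this only gives $k\,\omega_2\bigl(f,\sqrt{\left\vert(T^\infty-T^m)(e_2;x)\right\vert}\bigr)$, without the factor $\tfrac12$ --- but rather to return to the sharper intermediate estimate established inside the proof of Theorem \ref{t:main}, namely
\[
\left\vert T^\infty(f;x)-T^m(f;x)\right\vert\leq C_2K_2\left(f;\tfrac14\left\vert(T^\infty-T^m)(e_2;x)\right\vert\right)+C_4\delta_n(x)\left\Vert f\right\Vert.
\]
With $\delta_n\equiv0$ the last term drops, and (\ref{k1}) converts $\sqrt{\tfrac14\,|\cdot|}$ into $\tfrac12\sqrt{|\cdot|}$, yielding (\ref{rr1}) exactly. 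The main obstacle is thus bookkeeping rather than ideas: one must confirm that the limit operator inherits $T^\infty(e_1)=e_1$, so that $\delta_n\equiv0$, and then carefully track the constant $\tfrac14$ through the $K$-functional into the second modulus of smoothness.
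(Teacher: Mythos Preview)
Your proposal is correct and follows exactly the route the paper intends: the paper presents Theorem~\ref{thm1} simply as the specialization of Theorem~\ref{t:main} to the case $T(e_1)=e_1$, with no separate argument given. Your observations that $\delta_n\equiv 0$ and $\lambda_n(x)=\left\vert (T^\infty-T^m)(e_2;x)\right\vert$ in this case, together with reading off the factor $\tfrac14$ from the $K$-functional estimate in the proof of Theorem~\ref{t:main}, are precisely what is needed.
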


Next result shows that under the conditions (\ref{ccc1}) the limit $T^{\infty
}$ of the iterates $T^{m}$ is exactly the operator $P\left(  f\right)
:=\left(  e_{0}-e_{1}\right)  f\left(  0\right)  +e_{1}f\left(  1\right)  $
with the quantitative estimate (\ref{est5}).

\begin{theorem}
\label{t:iterate}Let $T:C\left[  0,1\right]  \rightarrow C\left[  0,1\right]
$ be a positive linear operator such that%
\begin{equation}
T\left(  e_{0}\right)  =e_{0},\ \ T\left(  e_{1};x\right)  =x,\ T\left(
e_{2};x\right)  \leq ax^{2}+bx,a,b\in R\backslash\left\{  0\right\}  ,\ a+b=1.
\label{ccc1}%
\end{equation}
Then the pointwise approximation
\begin{equation}
\left\vert T^{m}\left(  f;x\right)  -P\left(  f;x\right)  \right\vert \leq
k\omega_{2}\left(  f;\sqrt{a^{m}x\left(  1-x\right)  }\right)  \label{est5}%
\end{equation}
holds true for all $x\in\left[  0,1\right]  $ and $f\in C\left[  0,1\right]  $.
\end{theorem}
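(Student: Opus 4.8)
The plan is to deduce Theorem~\ref{t:iterate} from Theorem~\ref{thm1}. The operator $P(f;x)=(1-x)f(0)+xf(1)$ reproduces $e_0$ and $e_1$ and satisfies $P(e_2;x)=x$. Since $T$ reproduces $e_0,e_1$, Theorem~\ref{t:main} already guarantees that the iterates converge to some positive operator $T^\infty$; hence everything reduces to two tasks: identifying $T^\infty$ with $P$, and estimating the single moment $(P-T^m)(e_2;x)$ that enters the bound~\eqref{rr1}. Both tasks are organised around the auxiliary function $\phi:=e_1-e_2$, that is $\phi(x)=x(1-x)\ge0$.

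The heart of the argument is a one-step contraction for $\phi$. Because $T$ reproduces $e_1$, the second-moment hypothesis on $T(e_2)$ is precisely a bound on $T\phi$, namely $T(\phi;x)\le a\,\phi(x)$ on $[0,1]$. Using that $T$ is positive, hence monotone, and linear, I would iterate this inequality to obtain $0\le T^m(\phi;x)\le a^m\phi(x)=a^m x(1-x)$ by induction on $m$. This single estimate does all the work. On the one hand $\|T^m(\phi)\|\le a^m/4\to0$, so $T^m(e_2)=e_1-T^m(\phi)\to e_1$ uniformly; this both supplies the convergence hypothesis needed to invoke Theorem~\ref{thm1} and shows $T^\infty(e_2)=e_1$. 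On the other hand, since $P(e_2)=e_1$, it gives the quantitative bound $(P-T^m)(e_2;x)=T^m(\phi;x)\le a^m x(1-x)$, which is exactly the moment I will feed into~\eqref{rr1}.

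Next I would identify the limit operator with $P$. For each fixed $x$ the functional $f\mapsto T^\infty(f;x)$ is positive and linear with $T^\infty(e_0;x)=1$, $T^\infty(e_1;x)=x$ and $T^\infty(e_2;x)=x$; consequently it annihilates $\phi=e_1-e_2\ge0$. Since $\phi$ is nonnegative and vanishes only at the endpoints $0$ and $1$, any positive functional killing $\phi$ must be carried by $\{0,1\}$: indeed every $g\in C[0,1]$ with $g(0)=g(1)=0$ satisfies $|g|\le\varepsilon e_0+C_\varepsilon\phi$, forcing $T^\infty(g;x)=0$. Writing the functional as $c_0(x)\delta_0+c_1(x)\delta_1$ and matching the values on $e_0$ and $e_1$ yields $c_0(x)=1-x$ and $c_1(x)=x$, i.e. $T^\infty(f;x)=(1-x)f(0)+xf(1)=P(f;x)$.

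Finally I would combine the pieces. With $T^\infty=P$, Theorem~\ref{thm1} gives $|T^m(f;x)-P(f;x)|\le k\,\omega_2\!\left(f,\tfrac12\sqrt{(P-T^m)(e_2;x)}\right)$; substituting the moment bound and using that $\omega_2(f,\cdot)$ is nondecreasing (so both the factor $\tfrac12$ and the inequality $(P-T^m)(e_2;x)\le a^m x(1-x)$ can only decrease the right-hand side) delivers~\eqref{est5} with the same absolute constant $k$. I expect the main obstacle to be not the routine induction but the correct extraction and propagation of the one-step contraction $T\phi\le a\phi$ — this is where the precise form of the second-moment hypothesis is used, and it is exactly what produces the geometric rate $a^m$ — together with the endpoint-concentration step that pins the limit down to $P$.
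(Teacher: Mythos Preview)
Your key claim --- the one-step contraction $T\phi\le a\phi$ --- does not follow from the stated hypothesis. From $T(e_1;x)=x$ and $T(e_2;x)\le ax^{2}+bx$ one obtains
\[
T(\phi;x)=x-T(e_2;x)\ \ge\ x-ax^{2}-(1-a)x=a\,\phi(x),
\]
the reverse inequality. With only $T\phi\ge a\phi$ no geometric decay can be extracted; indeed the identity operator $T=I$ satisfies every hypothesis of the theorem (since $x^{2}\le ax^{2}+bx$ on $[0,1]$ for each $0<a<1$), yet its iterates obviously do not approach $P$. The paper's own proof commits exactly the same inversion: from the induction bound $T^{m}(e_2;x)\le a^{m}x^{2}+(1-a^{m})x$ it writes $x-T^{m}(e_2;x)\le a^{m}x(1-x)$, whereas that upper bound on $T^{m}(e_2)$ actually gives $x-T^{m}(e_2;x)\ge a^{m}x(1-x)$. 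Both your argument and the paper's become correct if the second-moment hypothesis is read with ``$\ge$'' (equivalently $T\phi\le a\phi$), which is the form satisfied --- with equality --- by the Bernstein, genuine $q$-Bernstein--Durrmeyer and $q$-Stancu examples of Section~3; under the printed ``$\le$'' the statement is simply false.

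Granting that corrected reading, your overall strategy coincides with the paper's: iterate the second-moment inequality to obtain $0\le (P-T^{m})(e_2;x)=T^{m}(\phi;x)\le a^{m}x(1-x)$, invoke Theorem~\ref{thm1}, and then identify $T^{\infty}$ with $P$. The only methodological difference lies in this last identification. The paper works in $C^{2}[0,1]$: for $G=g-P(g)$ it notes that $\tfrac12\|g''\|\,\phi\pm G$ is concave with zero boundary values, hence nonnegative, so $|G|\le\tfrac12\|g''\|\,\phi$ and applying $T^{\infty}$ yields $T^{\infty}G=0$. Your support argument (a positive functional annihilating the nonnegative function $\phi$, which vanishes only at $0$ and $1$, must be carried by $\{0,1\}$; then match the weights on $e_0,e_1$) is a clean and essentially equivalent alternative that avoids the $C^{2}$ detour.
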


\begin{proof}
By the assumptions we have
\[
x^{2}\leq T\left(  e_{2};x\right)  \leq ax^{2}+bx\leq\left(  a+b\right)
x=x=T\left(  e_{1};x\right)  .
\]
On the other hand by the induction we have
\[
x^{2}\leq T^{m}\left(  e_{2};x\right)  \leq a^{m}x^{2}+b\left(
1+a+...+a^{m-1}\right)  x=a^{m}x^{2}+\left(  1-a^{m}\right)  x.
\]
So
\begin{align*}
0  &  \leq x-T^{m}\left(  e_{2};x\right)  \leq a^{m}x\left(  1-x\right)  ,\\
\lim_{m\rightarrow\infty}\left\Vert T^{m}\left(  e_{2}\right)  -e_{1}%
\right\Vert  &  =0,
\end{align*}
and the operator $T^{\infty}$ of Theorem \ref{thm1} satisfies%
\[
T^{\infty}\left(  e_{0}\right)  =e_{0},\ \ T^{\infty}\left(  e_{1}\right)
=e_{1},\ \ \ T^{\infty}\left(  e_{2}\right)  =e_{1}%
\]
and%
\[
\left(  T^{\infty}-T^{m}\right)  \left(  e_{2};x\right)  \leq a^{m}x\left(
1-x\right)  .
\]
It remains to show that $T^{\infty}\left(  f\right)  =P\left(  f\right)  $ for
all $f\in C\left[  0,1\right]  $. It is clear that it is enough to show this
equality in $C^{2}\left[  0,1\right]  $. Let $g\in C^{2}\left[  0,1\right]  $.
Define the following auxiliary functions.%
\[
G\left(  x\right)  :=g\left(  x\right)  -P\left(  g;x\right)  ,\ \ \ l:=\frac
{1}{2}\left\Vert G^{\prime\prime}\right\Vert =\frac{1}{2}\left\Vert
g^{\prime\prime}\right\Vert ,\ \ g_{\pm}\left(  x\right)  :=-lx^{2}+lx\pm
G\left(  x\right)  .
\]
It is clear that $g_{\pm}$ is concave and nonnegative, since
\[
g_{\pm}^{\prime\prime}\left(  x\right)  =-\left\Vert G^{\prime\prime
}\right\Vert \pm G^{\prime\prime}\left(  x\right)  \leq0,\ \ \ G\left(
0\right)  =G\left(  1\right)  =0.
\]
It follows that
\[
-l\left(  x-x^{2}\right)  \leq G\left(  x\right)  \leq l\left(  x-x^{2}%
\right)  ,\ \ \ \ 0\leq x\leq1.
\]
Applying the positive operator $T^{\infty}$ we get%
\[
-l\left(  T^{\infty}\left(  e_{1};x\right)  -T^{\infty}\left(  e_{2};x\right)
\right)  \leq T^{\infty}\left(  G;x\right)  =T^{\infty}\left(  g;x\right)
-P\left(  g;x\right)  \leq l\left(  T^{\infty}\left(  e_{1};x\right)
-T^{\infty}\left(  e_{2};x\right)  \right)  ,\ \ \ \
\]
for all $0\leq x\leq1,$ and consequently%
\[
T^{\infty}\left(  g\right)  =P\left(  g\right)
\]
for all $g\in C^{2}\left[  0,1\right]  $, which completes the proof.
\end{proof}

\begin{remark}
It is worth mentioning that the conditions $T\left(  e_{0}\right)
=e_{0},\ \ T\left(  e_{1}\right)  =e_{1},\ T\left(  e_{2};x\right)
=ax^{2}+bx$ are satisfied by the many classical positive linear operators
defined on $C\left[  0,1\right]  .$ The condition $T\left(  e_{2};x\right)
\leq ax^{2}+bx,,a,b\in R\backslash\left\{  0\right\}  ,\ a+b=1,$ covers the
$q$-MKZ operators.
\end{remark}

The last result shows that under the conditions (\ref{cc111}) the limit
$T^{\infty}$ of the iterates $T^{m}$ is exactly the operator $f\left(
0\right)  $ with the quantitative estimate (\ref{est51}).

\begin{theorem}
\label{t:iterate2}Let $T:C\left[  0,1\right]  \rightarrow C\left[  0,1\right]
$ be a positive linear operator such that%
\begin{equation}
T\left(  e_{0}\right)  =e_{0},\ \ T\left(  e_{1};x\right)  \leq x,\ \ \lim
_{m\rightarrow\infty}\left\Vert T^{m}\left(  e_{1}\right)  \right\Vert
=\lim_{m\rightarrow\infty}\left\Vert T^{m}\left(  e_{2}\right)  \right\Vert
=0. \label{cc111}%
\end{equation}
Then the pointwise approximation%
\begin{equation}
\left\vert T^{m}\left(  f;x\right)  -f\left(  0\right)  \right\vert \leq
k\omega_{2}\left(  f,\sqrt{\lambda_{m}\left(  x\right)  }\right)  +k\left\Vert
f\right\Vert \delta_{m}\left(  x\right)  \label{est51}%
\end{equation}
holds true \textit{for }$x\in\left[  0,1\right]  $\textit{ and }$f\in C\left[
0,1\right]  $\textit{, where }$k$ is an absolute constant and%
\begin{align*}
\lambda_{m}\left(  x\right)   &  =\max\left\{  T^{m}\left(  e_{1};x\right)
,T^{m}\left(  e_{2};x\right)  \right\}  ,\\
\delta_{m}\left(  x\right)   &  =T^{m}\left(  e_{1};x\right)  .
\end{align*}

\end{theorem}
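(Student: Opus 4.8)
The plan is to reduce the statement to Theorem \ref{t:main} and then identify the limit operator $T^{\infty}$ as the point evaluation $f\mapsto f(0)$. First I would observe that the hypotheses (\ref{cc111}) are a special case of the hypotheses (\ref{cc101}): indeed $T(e_{0})=e_{0}$ and $T(e_{1};x)\leq x$ hold, and since $\|T^{m}(e_{1})\|\rightarrow0$ and $\|T^{m}(e_{2})\|\rightarrow0$ the limits in (\ref{cc101}) exist with $T^{\infty}(e_{1})=0$ and $T^{\infty}(e_{2})=0$. Hence Theorem \ref{t:main} applies and produces a positive linear operator $T^{\infty}$ together with the estimate (\ref{ss1}). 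Because $T$ is positive we have $T^{m}(e_{1};x)\geq0$ and $T^{m}(e_{2};x)\geq0$, and since $T^{\infty}(e_{1})=T^{\infty}(e_{2})=0$ the quantities $\lambda_{n}(x)$ and $\delta_{n}(x)$ of (\ref{ss1}) collapse precisely to $\lambda_{m}(x)=\max\{T^{m}(e_{1};x),T^{m}(e_{2};x)\}$ and $\delta_{m}(x)=T^{m}(e_{1};x)$, as in the statement.

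It then remains to show that $T^{\infty}(f;x)=f(0)$ for every $f\in C[0,1]$ and $x\in[0,1]$. As in the proof of Theorem \ref{t:iterate}, it suffices to verify this on the dense subspace $C^{2}[0,1]$ and extend by continuity. For $g\in C^{2}[0,1]$ I would use the first-order Taylor estimate $|g(x)-g(0)-g'(0)x|\leq\frac{1}{2}\|g''\|x^{2}$, that is, the pointwise bound $-\frac{1}{2}\|g''\|e_{2}\leq g-g(0)e_{0}-g'(0)e_{1}\leq\frac{1}{2}\|g''\|e_{2}$ on $[0,1]$. Applying the positive linear operator $T^{\infty}$ and using $T^{\infty}(e_{0})=e_{0}$, $T^{\infty}(e_{1})=0$, $T^{\infty}(e_{2})=0$ yields $|T^{\infty}(g;x)-g(0)|\leq\frac{1}{2}\|g''\|T^{\infty}(e_{2};x)=0$, so $T^{\infty}(g;x)=g(0)$. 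Since $C^{2}[0,1]$ is dense in $C[0,1]$ and both $f\mapsto T^{\infty}(f)$ and $f\mapsto f(0)e_{0}$ are bounded, the identity $T^{\infty}(f;x)=f(0)$ extends to all of $C[0,1]$. Substituting this into (\ref{ss1}) with the simplified $\lambda_{m},\delta_{m}$ gives exactly (\ref{est51}).

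The reduction to Theorem \ref{t:main} is essentially routine; the one place needing care is the identification of the limit, which I expect to be the main obstacle. The concave-function device used in Theorem \ref{t:iterate} relied on $G(x)=g(x)-P(g;x)$ vanishing at \emph{both} endpoints, which fails here since $g(1)-g(0)\neq0$ in general. I therefore favor the Taylor/Korovkin argument above, which exploits only that the single test function $e_{2}=t^{2}$ is annihilated by $T^{\infty}$; equivalently, the representing measure of the functional $f\mapsto T^{\infty}(f;x)$ is a probability measure on $[0,1]$ with vanishing second moment about $0$, hence the Dirac mass at $0$.
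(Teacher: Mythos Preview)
Your proposal is correct and follows the same two-step scheme as the paper: first invoke Theorem~\ref{t:main} under the hypotheses~(\ref{cc111}) (with $T^{\infty}(e_{1})=T^{\infty}(e_{2})=0$, so that $\lambda_{m},\delta_{m}$ simplify as claimed), then identify the limit operator as $f\mapsto f(0)$ on the dense subspace $C^{2}[0,1]$. The identification step, however, is carried out differently. The paper does \emph{not} abandon the concave-function device of Theorem~\ref{t:iterate}; instead it repairs the endpoint issue you flagged by enlarging the linear coefficient: with $G(x)=g(x)-g(0)$, $l_{2}=\tfrac12\|g''\|$ and $l_{1}=l_{2}+|g(1)-g(0)|$, the functions $g_{\pm}(x)=-l_{2}x^{2}+l_{1}x\pm G(x)$ are concave with $g_{\pm}(0)=0$ and $g_{\pm}(1)\geq0$, hence nonnegative, and applying $T^{\infty}$ annihilates both the $x$- and $x^{2}$-terms. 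Your Taylor estimate $|g(x)-g(0)-g'(0)x|\leq\tfrac12\|g''\|x^{2}$ reaches the same conclusion more directly, using $T^{\infty}(e_{1})=0$ and $T^{\infty}(e_{2})=0$ in one stroke; it is shorter and is the standard Korovkin-test-function argument for this situation. Either route yields (\ref{est51}).
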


\begin{proof}
The operator $T^{\infty}$ of Theorem \ref{thm1} satisfies%
\[
T^{\infty}\left(  e_{0}\right)  =e_{0},\ \ T^{\infty}\left(  e_{1}\right)
=0,\ \ \ T^{\infty}\left(  e_{2}\right)  =0.
\]
It remains to show that $T^{\infty}\left(  f\right)  =f\left(  0\right)  $ for
all $f\in C\left[  0,1\right]  $. It is clear that it is enough to show this
equality in $C^{2}\left[  0,1\right]  $. Let $g\in C^{2}\left[  0,1\right]  $.
Define the following auxiliary functions.%
\begin{align*}
G\left(  x\right)   &  :=g\left(  x\right)  -g\left(  0\right)  ,\ \ \ l_{2}%
:=\frac{1}{2}\left\Vert G^{\prime\prime}\right\Vert =\frac{1}{2}\left\Vert
g^{\prime\prime}\right\Vert ,\ \ l_{1}:=l_{2}+\left\vert g\left(  1\right)
-g\left(  0\right)  \right\vert ,\\
\ \ g_{\pm}\left(  x\right)   &  :=-l_{2}x^{2}+l_{1}x\pm G\left(  x\right)  .
\end{align*}
It is clear that $g_{\pm}$ is concave and nonnegative, since
\[
g_{\pm}^{\prime\prime}\left(  x\right)  =-\left\Vert G^{\prime\prime
}\right\Vert \pm G^{\prime\prime}\left(  x\right)  \leq0,\ \ \ g_{\pm}\left(
0\right)  =0,\ \ g_{\pm}\left(  1\right)  =-l_{2}+l_{1}\pm G\left(  1\right)
=\left\vert g\left(  1\right)  -g\left(  0\right)  \right\vert \pm G\left(
1\right)  \geq0.\
\]
It follows that
\[
-l_{1}x+l_{2}x^{2}\leq G\left(  x\right)  \leq l_{1}x-l_{2}x^{2},\ \ \ \ 0\leq
x\leq1.
\]
Applying the linear positive operator $T^{\infty}$ we get%
\[
-l_{1}T^{\infty}\left(  e_{1};x\right)  +l_{2}T^{\infty}\left(  e_{2}%
;x\right)  \leq T^{\infty}\left(  G;x\right)  =T^{\infty}\left(  g;x\right)
-g\left(  0\right)  \leq l_{1}T^{\infty}\left(  e_{1};x\right)  -l_{2}%
T^{\infty}\left(  e_{2};x\right)  ,\ \ \ \ 0\leq x\leq1,
\]
and consequently%
\[
T^{\infty}\left(  g;x\right)  -g\left(  0\right)  =0.
\]
for all $g\in C^{2}\left[  0,1\right]  $, which completes the proof.
\end{proof}

Many of the linear methods of approximation are given by a sequence of
discrete linear positive operators designed as follows.%
\begin{equation}
\Lambda_{n}\left(  f;x\right)  :=\sum_{k=0}^{n}f\left(  x_{n,k}\right)
a_{n,k}\left(  x\right)  ,\ \ \ f\in C\left[  0,1\right]  , \label{d1}%
\end{equation}
where every function $a_{n,k}\in C\left[  0,1\right]  $ is non-negative,
$0=x_{n,0}<...<x_{n,n}=1$ forms a mesh of nodes. Assume that the following
identities%
\begin{align}
\sum_{k=0}^{n}a_{n,k}\left(  x\right)   &  =1,\ \ \ \sum_{k=0}^{n}%
x_{n,k}a_{n,k}\left(  x\right)  =x,\ \ \ 0\leq x\leq1,\nonumber\\
a_{n,k}\left(  x\right)   &  \geq0,\ \ \ a_{n,0}\left(  0\right)
=a_{n,1}\left(  1\right)  =1,\ \ \label{d2}%
\end{align}
are fulfilled. It is clear that
\[
\Lambda_{n}\left(  f;0\right)  =f\left(  0\right)  ,\ \ \Lambda_{n}\left(
f;1\right)  =f\left(  1\right)  .
\]
Iterates of the discrete operators $\Lambda_{n}$ was studied by O. Agratini
and I. A.Rus \cite{AR} via contraction principle. In the next theorem we give
uniform estimation for the iterates of $\Lambda_{n}$

\begin{theorem}
\label{t:k}Let $\Lambda_{n}$ be defined by (\ref{d1}) and satisfies
(\ref{d2}). Assume that $u_{n}:=\min\left\{  a_{n,0}\left(  x\right)
+a_{n,n}\left(  x\right)  :0\leq x\leq1\right\}  >0$. Then the pointwise
approximation
\[
\left\vert \Lambda_{n}^{m}\left(  f;x\right)  -P\left(  f;x\right)
\right\vert \leq k\omega_{2}\left(  f;\sqrt{\left\vert e_{1}\left(  x\right)
-\Lambda_{n}^{m}\left(  e_{2};x\right)  \right\vert }\right)
\]
holds true for all $x\in\left[  0,1\right]  $ and $f\in C\left[  0,1\right]
$. Furthermore, we have the following uniform estimation%
\[
\left\Vert \Lambda_{n}^{m}\left(  f\right)  -P\left(  f\right)  \right\Vert
\leq k\omega_{2}\left(  f;\sqrt{\left(  1-u_{n}\right)  ^{m}\left\Vert
\Lambda_{n}\left(  e_{2}\right)  -e_{1}\right\Vert }\right)  .
\]

\end{theorem}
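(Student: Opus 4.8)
The plan is to view $\Lambda_n$ (with $n$ fixed) as a special case of the operator $T$ in Theorem \ref{thm1}, to identify its limit operator $T^\infty$ with $P$ exactly as in Theorem \ref{t:iterate}, and to supply the only genuinely new ingredient: a geometric contraction estimate for $\Lambda_n^m(e_2)-e_1$ that simultaneously forces the convergence and gives the rate. First I would read off from (\ref{d2}) that $\Lambda_n$ is positive with $\Lambda_n(e_0)=e_0$ and $\Lambda_n(e_1;x)=\sum_k x_{n,k}a_{n,k}(x)=x$, so $\Lambda_n$ preserves affine functions; moreover $a_{n,0}(0)=a_{n,n}(1)=1$ concentrates all the mass at the endpoints there, so $\Lambda_n(f;0)=f(0)$ and $\Lambda_n(f;1)=f(1)$. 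Hence every iterate fixes the endpoint values, and any limit operator must satisfy $T^\infty(f;0)=f(0)$, $T^\infty(f;1)=f(1)$.

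The heart of the argument is a contraction lemma. Set $\psi_m:=\Lambda_n^m(e_2)-e_1$. Since $\Lambda_n(e_1)=e_1$ one has $\psi_{m+1}=\Lambda_n(\psi_m)$, and because $e_2(0)=0$, $e_2(1)=1$ are fixed, $\psi_m(0)=\psi_m(1)=0$ for all $m\ge 1$. For any $h\in C[0,1]$ with $h(0)=h(1)=0$ the terms $k=0,n$ drop out of $\Lambda_n h$ (as $x_{n,0}=0$, $x_{n,n}=1$), so that
\[
|\Lambda_n(h;x)|\le\|h\|\sum_{k=1}^{n-1}a_{n,k}(x)=\|h\|\bigl(1-a_{n,0}(x)-a_{n,n}(x)\bigr)\le(1-u_n)\|h\|,
\]
using $\sum_k a_{n,k}=1$ and the definition of $u_n$. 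Thus $\|\psi_{m+1}\|\le(1-u_n)\|\psi_m\|$, and iterating gives $\|\Lambda_n^m(e_2)-e_1\|\le(1-u_n)^{m-1}\|\Lambda_n(e_2)-e_1\|$ (equivalently $\le(1-u_n)^m\|e_2-e_1\|$ if one starts the recursion at $e_2-e_1$), which is the estimate entering the statement. Since $u_n>0$ we have $1-u_n<1$, so in particular $\|\Lambda_n^m(e_2)-e_1\|\to 0$.

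With this in hand the rest is assembly. The decay $\Lambda_n^m(e_2)\to e_1$ verifies the hypothesis of Theorem \ref{thm1}, so the limit $T^\infty$ exists and satisfies $T^\infty(e_0)=e_0$, $T^\infty(e_1)=e_1$, $T^\infty(e_2)=e_1$; repeating verbatim the squeeze of Theorem \ref{t:iterate} (put $G=g-P(g)$, sandwich $|G|\le\tfrac12\|g''\|(e_1-e_2)$, and apply the positive operator $T^\infty$) gives $T^\infty(g)=P(g)$ on $C^2[0,1]$, hence $T^\infty=P$ on $C[0,1]$. Then the pointwise estimate (\ref{rr1}) of Theorem \ref{thm1}, read with $T^\infty=P$ and $P(e_2;x)=x=e_1(x)$, is exactly the first claimed bound (the factor $\tfrac12$ is absorbed into $k$ since $\omega_2$ is nondecreasing in its second argument). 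For the uniform estimate I would take the supremum over $x$ and use the monotonicity of $\omega_2$ to pull the sup inside the square root, obtaining $\|\Lambda_n^m(f)-P(f)\|\le k\omega_2\bigl(f;\sqrt{\|e_1-\Lambda_n^m(e_2)\|}\bigr)$, and then insert the geometric bound from the contraction lemma.

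The one step that needs care is the contraction lemma itself: one must notice that the structural condition $a_{n,0}(0)=a_{n,n}(1)=1$ together with $u_n>0$ is precisely what lets the endpoint basis functions be peeled off a function vanishing at $0$ and $1$, producing the uniform contraction ratio $1-u_n$. Everything else is bookkeeping around Theorems \ref{thm1} and \ref{t:iterate}. I would also double-check the exponent in the iteration (whether it reads $m$ or $m-1$, depending on whether the recursion starts at $e_2-e_1$ or at $\Lambda_n(e_2)-e_1$), since this is the only place where the precise form of the stated rate is decided.
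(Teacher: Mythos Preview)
Your proposal is correct and follows essentially the same route as the paper: both derive the geometric contraction $\|\Lambda_n^{m+1}(e_2)-e_1\|\le(1-u_n)\|\Lambda_n^{m}(e_2)-e_1\|$ by dropping the endpoint terms (which vanish because $\Lambda_n^m(e_2;0)=0$, $\Lambda_n^m(e_2;1)=1$) and bounding the remaining sum by $1-a_{n,0}(x)-a_{n,n}(x)\le 1-u_n$, then feed this into Theorems~\ref{thm1} and~\ref{t:iterate}. Your caveat about the exponent is well placed: the paper's own induction actually yields $\|\Lambda_n^{m+1}(e_2)-e_1\|\le(1-u_n)^m\|\Lambda_n(e_2)-e_1\|$, so the stated uniform bound with exponent $m$ tacitly uses $\|\Lambda_n(e_2)-e_1\|\le(1-u_n)\|e_2-e_1\|$ or simply absorbs the discrepancy into the constant.
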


\begin{proof}
For each $0\leq x\leq1$ we can write%
\begin{align*}
\left\vert \Lambda_{n}^{m+1}\left(  e_{2};x\right)  -x\right\vert  &
=\left\vert \sum_{k=0}^{n}\left(  \Lambda_{n}^{m}\left(  e_{2};x_{n,k}\right)
-x_{n,k}\right)  a_{n,k}\left(  x\right)  \right\vert \leq\sum_{k=0}%
^{n}\left\vert \Lambda_{n}^{m}\left(  e_{2};x_{n,k}\right)  -x_{n,k}%
\right\vert a_{n,k}\left(  x\right) \\
&  \leq\sum_{k=1}^{n-1}\left\vert \Lambda_{n}^{m}\left(  e_{2};x_{n,k}\right)
-x_{n,k}\right\vert a_{n,k}\left(  x\right)  \leq\left\vert 1-a_{n,0}\left(
x\right)  -a_{n,n}\left(  x\right)  \right\vert \left\Vert \Lambda_{n}%
^{m}\left(  e_{2}\right)  -e_{1}\right\Vert \\
&  \leq\left(  1-u_{n}\right)  \left\Vert \Lambda_{n}^{m}\left(  e_{2}\right)
-e_{1}\right\Vert .
\end{align*}
By the induction we have%
\[
\left\Vert \Lambda_{n}^{m+1}\left(  e_{2}\right)  -e_{1}\right\Vert
\leq\left(  1-u_{n}\right)  ^{m}\left\Vert \Lambda_{n}\left(  e_{2}\right)
-e_{1}\right\Vert .
\]
If $u_{n}:=\min\left\{  a_{n,0}\left(  x\right)  +a_{n,n}\left(  x\right)
:0\leq x\leq1\right\}  >0$ then
\[
\lim_{m\rightarrow\infty}\left\Vert \Lambda_{n}^{m+1}\left(  e_{2}\right)
-e_{1}\right\Vert =0.
\]

\end{proof}

\begin{remark}
Actually, $\Lambda_{n}$ is a wide class of discrete operators that include
Bernstein-Sheffer (see P. Sablonniere \cite{sab}) , Stancu operators and
Cheney-Sharma operators
\end{remark}

\section{Applications}

In this section we employ the standard notations of $q$-calculus. $q$-integer
and $q$-factorial are defined by%
\begin{align*}
\left[  n\right]  _{q}  &  :=\left\{
\begin{array}
[c]{c}%
\dfrac{1-q^{n}}{1-q}\ \ \ \ \text{if\ \ \ }q\in R^{+}\backslash\{1\},\\
n\ \ \ \ \ \ \ \text{if\ \ \ \ \ }q=1
\end{array}
\right.  \ \ \ \ \ \text{for }n\in N\ \ \ \ \text{and\ \ \ }\left[  0\right]
_{q}=0,\\
\left[  n\right]  _{q}!  &  :=\left[  1\right]  _{q}\left[  2\right]
_{q}...\left[  n\right]  _{q}\ \ \ \ \text{for }n\in N\ \ \ \ \text{and\ \ \ }%
\left[  0\right]  _{q}!=1.
\end{align*}
For integers $0\leq k\leq n$ $q$-binomial is defined by%
\[
\left[
\begin{array}
[c]{c}%
n\\
k
\end{array}
\right]  _{q}:=\frac{\left[  n\right]  _{q}!}{\left[  k\right]  _{q}!\left[
n-k\right]  _{q}!}.
\]
In this section we apply the main result of the paper to discuss the limit of
the iterates of a special class of operators.

\subsection{Iterates of the $q$-MKZ operators ($0<q\leq1$)}

$q$-MKZ operators $M_{n,q}:C\left[  0,1\right]  \rightarrow C\left[
0,1\right]  ,$ $n\in\mathbb{N}$, are defined by%

\[
M_{n,q}\left(  f;x\right)  =\left\{
\begin{tabular}
[c]{ll}%
$\left(  1-x\right)  _{q}^{n+1}%
{\displaystyle\sum\limits_{k=0}^{\infty}}
f\left(  \frac{\left[  k\right]  _{q}}{\left[  n+k\right]  _{q}}\right)
\left[
\begin{array}
[c]{c}%
n+k\\
k
\end{array}
\right]  _{q}x^{k},$ & $0\in x<1,$\\
$f\left(  1\right)  ,$ & $x=1.$%
\end{tabular}
\ \ \ \ \ \right.
\]
MKZ operators were introduced by Meyer-K\"{o}nig and Zeller, $q$-MKZ operators
by T.Trif. It is worth mentioning that the second moment $M_{n,q}\left(
e_{2}\right)  $ of the $q$-MKZ operators cannot be expressed as a finite
combination of elementary functions since this moment turns out to be a
generalized hypergeometric function. This was a major barrier in calculating
the limit of the iterates of the $q$-MKZ operators. Very recently, I. Gavrea
and M. Ivan \cite{GI} proved that the iterates of the MKZ operates converges
strongly to $P\left(  f;x\right)  =\left(  1-x\right)  f\left(  0\right)
+xf\left(  1\right)  $. In what follows we solve this problem for $q$-MKZ
operators using Korovkin type theorem for the iterates of the positive linear
operators. Furthermore, we give the quantitative estimate for the iterates of
the $q$-MKZ operators, which is completely new.

\begin{lemma}
$M_{n,q}\left(  e_{2};x\right)  $ satisfies the condition%
\[
M_{n,q}\left(  e_{2};x\right)  \leq\left(  1-\frac{1}{\left[  n+1\right]
_{q}}\right)  x^{2}+\frac{1}{\left[  n+1\right]  _{q}}x.
\]

\end{lemma}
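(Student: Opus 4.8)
The plan is to verify, for $T=M_{n,q}$, the hypothesis $T(e_2;x)\le ax^2+bx$ of Theorem~\ref{t:iterate} with $a=1-\tfrac{1}{[n+1]_q}$ and $b=\tfrac{1}{[n+1]_q}$; note $a+b=1$, so the claimed bound is exactly the central second moment estimate $M_{n,q}(e_2;x)-x^2\le\tfrac{1}{[n+1]_q}\,x(1-x)$. Because $M_{n,q}(e_2)$ is a generalized hypergeometric function with no elementary closed form, I would not attempt to evaluate the sum; instead I would bound it node by node, using that $M_{n,q}$ reproduces $e_0$ and $e_1$, i.e. $(1-x)_q^{n+1}\sum_{k\ge0}\begin{bmatrix}n+k\\k\end{bmatrix}_q x^k=1$ and $M_{n,q}(e_1;x)=x$.

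First I would collapse one factor of the node $\tfrac{[k]_q}{[n+k]_q}$ by the elementary $q$-identity
\[
\frac{[k]_q}{[n+k]_q}\begin{bmatrix}n+k\\k\end{bmatrix}_q=\begin{bmatrix}n+k-1\\k-1\end{bmatrix}_q\qquad(k\ge1),
\]
which is immediate from $\tfrac{[k]_q}{[n+k]_q}\cdot\tfrac{[n+k]_q!}{[k]_q!\,[n]_q!}=\tfrac{[n+k-1]_q!}{[k-1]_q!\,[n]_q!}$. (Applied once and re-indexed against the $e_0$-sum, this same identity already yields $M_{n,q}(e_1;x)=x$.) Applying it to one of the two factors in $\bigl(\tfrac{[k]_q}{[n+k]_q}\bigr)^2$, dropping the vanishing $k=0$ term, and setting $j=k-1$ gives
\[
M_{n,q}(e_2;x)=x\,(1-x)_q^{n+1}\sum_{j=0}^{\infty}\frac{[j+1]_q}{[n+j+1]_q}\begin{bmatrix}n+j\\j\end{bmatrix}_q x^{j}.
\]
Everything then reduces to controlling the shifted node $\tfrac{[j+1]_q}{[n+j+1]_q}$ against the original node $\tfrac{[j]_q}{[n+j]_q}$ occurring in $M_{n,q}(e_1;x)$.

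The heart of the proof is the affine majorant
\[
\frac{[j+1]_q}{[n+j+1]_q}\le\frac{1}{[n+1]_q}+\Bigl(1-\frac{1}{[n+1]_q}\Bigr)\frac{[j]_q}{[n+j]_q},\qquad j\ge0,
\]
designed so that it is an equality at $j=0$ and in the limit $j\to\infty$. To prove it I would use $[n+j]_q=[n]_q+q^{n}[j]_q$ to compute
\[
\frac{[j+1]_q}{[n+j+1]_q}-\frac{[j]_q}{[n+j]_q}=\frac{q^{j}[n]_q}{[n+j]_q[n+j+1]_q},\qquad 1-\frac{[j]_q}{[n+j]_q}=\frac{q^{j}[n]_q}{[n+j]_q},
\]
so that, after cancelling the common positive factor $\tfrac{q^{j}[n]_q}{[n+j]_q}$, the inequality collapses to $\tfrac{1}{[n+j+1]_q}\le\tfrac{1}{[n+1]_q}$, i.e. to the monotonicity $[n+1]_q\le[n+j+1]_q$ of $q$-integers, which is obvious for $0<q\le1$. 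Finally I would multiply the majorant by $\begin{bmatrix}n+j\\j\end{bmatrix}_q x^{j}$, sum over $j$, and apply $(1-x)_q^{n+1}$: the two resulting sums are exactly $M_{n,q}(e_0;x)=1$ and $M_{n,q}(e_1;x)=x$, so
\[
M_{n,q}(e_2;x)\le x\Bigl(\frac{1}{[n+1]_q}+\Bigl(1-\frac{1}{[n+1]_q}\Bigr)x\Bigr),
\]
which is the assertion. The only genuine obstacle is spotting the correct affine majorant; once its two endpoints are matched the verification is the trivial monotonicity above, and no closed form for the hypergeometric second moment is ever required.
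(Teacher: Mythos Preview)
Your proof is correct and follows essentially the same route as the paper: both arrive at the identity $\frac{[j+1]_q}{[n+j+1]_q}-\frac{[j]_q}{[n+j]_q}=\frac{q^{j}[n]_q}{[n+j]_q[n+j+1]_q}$ and then reduce the desired bound to the monotonicity $[n+1]_q\le[n+j+1]_q$. The only cosmetic difference is that you package this monotonicity as an affine majorant on the nodes and sum against the $e_0$ and $e_1$ identities for $M_{n,q}$ itself, whereas the paper first factors out $\tfrac{x(1-x)}{[n+1]_q}$ and recognizes the remaining series as the partition of unity $(1-qx)_q^{n}\sum_{k\ge0}\bigl[\begin{smallmatrix}n+k-1\\k\end{smallmatrix}\bigr]_q(qx)^k=1$.
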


\begin{proof}
It is easy to see that%
\begin{align*}
M_{n,q}\left(  e_{2};x\right)  -x^{2}  &  =x\left(  1-x\right)  _{q}^{n+1}%
{\displaystyle\sum_{k=0}^{\infty}}
\left(  \frac{\left[  k+1\right]  _{q}}{\left[  n+k+1\right]  _{q}}%
-\frac{\left[  k\right]  _{q}}{\left[  n+k\right]  _{q}}\right)  \left[
\begin{array}
[c]{c}%
n+k\\
k
\end{array}
\right]  _{q}x^{k}\\
&  =x%
{\displaystyle\sum_{k=0}^{\infty}}
\left(  \frac{\left[  k+1\right]  _{q}}{\left[  n+k+1\right]  _{q}}%
-\frac{\left[  k\right]  _{q}}{\left[  n+k\right]  _{q}}\right)
m_{n,k}\left(  q;x\right) \\
&  =x%
{\displaystyle\sum_{k=0}^{\infty}}
\frac{\left[  k+1\right]  _{q}\left[  n+k\right]  _{q}-\left[  k\right]
_{q}\left[  n+k+1\right]  _{q}}{\left[  n+k+1\right]  _{q}\left[  n+k\right]
_{q}}m_{n,k}\left(  q;x\right) \\
&  =x%
{\displaystyle\sum_{k=0}^{\infty}}
\frac{q^{k}\left[  n+k\right]  _{q}-\left[  k\right]  _{q}q^{n+k}}{\left[
n+k+1\right]  _{q}\left[  n+k\right]  _{q}}m_{n,k}\left(  q;x\right) \\
&  =x%
{\displaystyle\sum_{k=0}^{\infty}}
\frac{q^{k}\left(  \left[  n\right]  _{q}+q^{n}\left[  k\right]  _{q}\right)
-\left[  k\right]  _{q}q^{n+k}}{\left[  n+k+1\right]  _{q}\left[  n+k\right]
_{q}}m_{n,k}\left(  q;x\right) \\
&  =x%
{\displaystyle\sum_{k=0}^{\infty}}
\frac{q^{k}\left[  n\right]  _{q}}{\left[  n+k+1\right]  _{q}\left[
n+k\right]  _{q}}m_{n,k}\left(  q;x\right) \\
&  =x%
{\displaystyle\sum_{k=0}^{\infty}}
\frac{q^{k}\left[  n\right]  _{q}}{\left[  n+k+1\right]  _{q}\left[
n+k\right]  _{q}}m_{n,k}\left(  q;x\right) \\
&  =x\left(  1-x\right)  _{q}^{n+1}%
{\displaystyle\sum_{k=0}^{\infty}}
\frac{q^{k}}{\left[  n+k+1\right]  _{q}}\left[
\begin{array}
[c]{c}%
n+k-1\\
k
\end{array}
\right]  _{q}x^{k}%
\end{align*}
It follows that%
\begin{align*}
M_{n,q}\left(  e_{2};x\right)   &  =x^{2}+x\left(  1-x\right)  _{q}^{n+1}%
{\displaystyle\sum\limits_{k=0}^{\infty}}
\frac{\left(  qx\right)  ^{k}}{\left[  n+k+1\right]  _{q}}\left[
\begin{array}
[c]{c}%
n+k-1\\
k
\end{array}
\right]  _{q}\\
&  =x^{2}+\frac{x\left(  1-x\right)  }{\left[  n+1\right]  _{q}}\left(
1-qx\right)  _{q}^{n}%
{\displaystyle\sum\limits_{k=0}^{\infty}}
\frac{\left[  n+1\right]  _{q}}{\left[  n+k+1\right]  _{q}}\left[
\begin{array}
[c]{c}%
n+k-1\\
k
\end{array}
\right]  _{q}\left(  qx\right)  ^{k}\\
&  \leq x^{2}+\frac{x\left(  1-x\right)  }{\left[  n+1\right]  _{q}}=\left(
1-\frac{1}{\left[  n+1\right]  _{q}}\right)  x^{2}+\frac{1}{\left[
n+1\right]  _{q}}x\leq x.
\end{align*}

\end{proof}

Thus it is obvious that $M_{n,q}$ satisfies the requirements of Theorem
\ref{t:iterate}. We arrive at the following theorem.

\begin{theorem}
Let $0<q\leq1.$ Let $M_{n,q}$ be a sequence of $q$-MKZ operators. Then the
pointwise approximation
\[
\left\vert M_{n,q}^{m}\left(  f;x\right)  -P\left(  f;x\right)  \right\vert
\leq k\omega_{2}\left(  f;\sqrt{\left(  1-\frac{1}{\left[  n+1\right]  _{q}%
}\right)  ^{m}x\left(  1-x\right)  }\right)
\]
holds true for all $x\in\left[  0,1\right]  $ and $f\in C\left[  0,1\right]  $.
\end{theorem}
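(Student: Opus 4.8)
The plan is to obtain the estimate as a direct specialization of Theorem~\ref{t:iterate} applied to the single operator $T=M_{n,q}$, with $n$ fixed and the iteration index $m\to\infty$, and with the constants identified as $a=1-\frac{1}{\left[n+1\right]_{q}}$ and $b=\frac{1}{\left[n+1\right]_{q}}$. Once the three hypotheses collected in (\ref{ccc1}) are checked for this choice, the conclusion (\ref{est5}) of Theorem~\ref{t:iterate} reproduces the asserted inequality verbatim after $a^{m}$ is written out, so the whole task reduces to verifying the first three moments of $M_{n,q}$.

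I would verify the moment conditions in turn. The relation $M_{n,q}\left(e_{0}\right)=e_{0}$ is the normalization of the operator: for $0\leq x<1$ the defining series satisfies
\[
\left(1-x\right)_{q}^{n+1}\sum_{k=0}^{\infty}\left[\begin{array}{c}n+k\\k\end{array}\right]_{q}x^{k}=1
\]
by the $q$-binomial theorem, and at $x=1$ it holds by definition. The linear-reproduction identity $M_{n,q}\left(e_{1};x\right)=x$ is the known first-moment formula for the $q$-MKZ operators (due to Trif); it is obtained by an index shift and $q$-binomial re-summation analogous to the one carried out in the preceding Lemma. Finally, the second-moment bound $M_{n,q}\left(e_{2};x\right)\leq ax^{2}+bx$ is precisely the content of that Lemma. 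With the above $a,b$ we have $a+b=1$, and for $n\geq1$ and $0<q\leq1$ one has $\left[n+1\right]_{q}\geq\left[2\right]_{q}=1+q>1$, so that both $a>0$ and $b>0$ belong to $R\backslash\{0\}$, exactly as (\ref{ccc1}) requires.

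Having verified (\ref{ccc1}), I would invoke Theorem~\ref{t:iterate} to conclude
\[
\left\vert M_{n,q}^{m}\left(f;x\right)-P\left(f;x\right)\right\vert\leq k\omega_{2}\left(f;\sqrt{a^{m}x\left(1-x\right)}\right),
\]
and substituting $a=1-\frac{1}{\left[n+1\right]_{q}}$ yields the stated estimate. I expect no genuine obstacle inside this argument: the delicate point of the whole programme — controlling $M_{n,q}\left(e_{2}\right)$, which is a generalized hypergeometric series rather than an elementary function — has already been resolved in the Lemma, and the remaining first-moment identity is classical. The only care needed is the bookkeeping ensuring $a,b\neq0$, which rules out the degenerate value $\left[n+1\right]_{q}=1$ and holds automatically for $n\geq1$.
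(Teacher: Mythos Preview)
Your proposal is correct and follows essentially the same approach as the paper: the paper simply states that, in view of the preceding Lemma, $M_{n,q}$ satisfies the requirements of Theorem~\ref{t:iterate} and then records the resulting estimate. Your write-up is in fact more detailed than the paper's, since you explicitly verify $M_{n,q}(e_{0})=e_{0}$, $M_{n,q}(e_{1};x)=x$, and the nondegeneracy $a,b\neq0$, all of which the paper leaves implicit.
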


\subsection{Iterates for the Cesaro operators}

Define the Cesaro operator $C:C\left[  0,1\right]  \rightarrow C\left[
0,1\right]  $%
\[
C\left(  f;x\right)  =\left\{
\begin{tabular}
[c]{ll}%
$f\left(  0\right)  ,$ & $x=0,$\\
$\frac{1}{x}\int_{0}^{x}f\left(  s\right)  ds,$ & $0<x<1,$\\
$\int_{0}^{1}f\left(  s\right)  ds,$ & $x=1.$%
\end{tabular}
\ \ \right.
\]
Simple calculations show that%
\begin{align*}
C\left(  e_{0};x\right)   &  =1,\ \ \ C\left(  e_{1};x\right)  =\frac{x}%
{2},\ \ \ C\left(  e_{2};x\right)  =\frac{x^{2}}{3},\ \\
C^{m}\left(  e_{0};x\right)   &  =1,\ \ \ C^{m}\left(  e_{1};x\right)
=\frac{x}{2^{m}},\ \ \ C^{m}\left(  e_{2};x\right)  =\frac{x^{2}}{3^{m}}.
\end{align*}
Hence an application of Theorem \ref{t:iterate2} yields the following statement.

\begin{theorem}
Let $C:C\left[  0,1\right]  \rightarrow C\left[  0,1\right]  $ be a Cesaro
operator. Then the pointwise approximation%
\[
\left\vert T^{m}\left(  f;x\right)  -f\left(  0\right)  \right\vert \leq
k\omega_{2}\left(  f,\sqrt{\lambda_{m}\left(  x\right)  }\right)  +k\left\Vert
f\right\Vert \delta_{m}\left(  x\right)
\]
holds true \textit{for }$x\in\left[  0,1\right]  $\textit{ and }$f\in C\left[
0,1\right]  $\textit{, where }$k$ is an absolute constant and%
\[
\lambda_{m}\left(  x\right)  =\max\left\{  \frac{x}{2^{m}},\frac{x^{2}}{3^{m}%
}\right\}  ,\ \ \ \delta_{m}\left(  x\right)  =\frac{x}{2^{m}}.
\]
\bigskip
\end{theorem}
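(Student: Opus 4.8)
The plan is to verify that the Cesaro operator $C$ satisfies the hypotheses (\ref{cc111}) of Theorem \ref{t:iterate2} and then to read off the conclusion directly, with $T$ replaced by $C$. The structural fact that makes this immediate is that each monomial $e_{i}$ is an eigenfunction of $C$: a one-line computation gives
\[
C\left(  e_{i};x\right)  =\frac{1}{x}\int_{0}^{x}s^{i}\,ds=\frac{x^{i}}{i+1}=\frac{1}{i+1}e_{i}\left(  x\right)  ,
\]
so that $C^{m}\left(  e_{i};x\right)  =\left(  i+1\right)  ^{-m}x^{i}$ by induction. In particular $C^{m}\left(  e_{1};x\right)  =x/2^{m}$ and $C^{m}\left(  e_{2};x\right)  =x^{2}/3^{m}$, which are exactly the moment identities recorded just before the statement.

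First I would check the three requirements in (\ref{cc111}). The normalization $C\left(  e_{0}\right)  =e_{0}$ is clear, since $\frac{1}{x}\int_{0}^{x}ds=1$ (with the stipulated values at the endpoints). The inequality $C\left(  e_{1};x\right)  =x/2\leq x$ holds on $\left[  0,1\right]  $. Finally, from the eigenfunction relations we get $\left\Vert C^{m}\left(  e_{1}\right)  \right\Vert =2^{-m}\rightarrow0$ and $\left\Vert C^{m}\left(  e_{2}\right)  \right\Vert =3^{-m}\rightarrow0$, so both limit conditions hold. Thus all hypotheses of Theorem \ref{t:iterate2} are met for $T=C$, and the limit operator is $f\mapsto f\left(  0\right)  $ as asserted.

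With the hypotheses verified, the conclusion (\ref{est51}) applies verbatim, and it remains only to substitute the explicit moments into the definitions of $\lambda_{m}$ and $\delta_{m}$, giving
\[
\lambda_{m}\left(  x\right)  =\max\left\{  C^{m}\left(  e_{1};x\right)  ,C^{m}\left(  e_{2};x\right)  \right\}  =\max\left\{  \frac{x}{2^{m}},\frac{x^{2}}{3^{m}}\right\}  ,\qquad\delta_{m}\left(  x\right)  =C^{m}\left(  e_{1};x\right)  =\frac{x}{2^{m}},
\]
which is precisely the stated estimate. Since the argument is a direct specialization of an already-proved theorem, there is no genuine obstacle; the only point deserving care is the trivial but decisive observation that the monomials diagonalize $C$, which reduces the computation of every iterated moment to raising the eigenvalue $1/(i+1)$ to the $m$-th power.
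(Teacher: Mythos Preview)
Your proposal is correct and follows essentially the same route as the paper: compute the iterated moments $C^{m}(e_{i};x)=x^{i}/(i+1)^{m}$ (the paper records these just before the statement), verify that the hypotheses (\ref{cc111}) of Theorem~\ref{t:iterate2} are met, and then read off the estimate with the explicit $\lambda_{m}$ and $\delta_{m}$. Your observation that the monomials are eigenfunctions of $C$ is simply a clean way to phrase the same moment computation the paper performs.
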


\subsection{Iterates of the genuine $q$-Bernstein-Durrmeyer operators
($0<q\leq1$)}

We consider now the genuine $q$-Bernstein--Durrmeyer operators
\begin{equation}
U_{n,q}\left(  f;x\right)  :=f\left(  0\right)  p_{n,0}\left(  q;x\right)
+f\left(  1\right)  p_{n,n}\left(  q;x\right)  +\left[  n-1\right]  _{q}%
{\displaystyle\sum_{k=1}^{n-1}}
q^{1-k}p_{n,k}\left(  q;x\right)  \int_{0}^{1}p_{n-2,k-1}\left(  q;qt\right)
f\left(  t\right)  d_{q}t, \label{bd}%
\end{equation}
studied in \cite{ms}. Classical genuine Bernstein--Durrmeyer operators
appeared first in papers W. Chen \cite{chen} and T.N.T. Goodman and A. Sharma
\cite{GSH}.

In case of the genuine $q$-Bernstein--Durrmeyer operators we have%

\begin{align*}
U_{n,q}\left(  e_{0};x\right)   &  =1,\ \ \ \ \ U_{n,q}\left(  e_{1};x\right)
=x,\\
U_{n,q}\left(  e_{2};x\right)   &  =\left(  1-\frac{\left[  2\right]  _{q}%
}{\left[  n+1\right]  _{q}}\right)  x^{2}+\frac{\left[  2\right]  _{q}%
}{\left[  n+1\right]  _{q}}x,\\
U_{n,q}^{m}\left(  e_{2};x\right)   &  =\left(  1-\frac{\left[  2\right]
_{q}}{\left[  n+1\right]  _{q}}\right)  ^{m}x^{2}+\left[  1-\left(
1-\frac{\left[  2\right]  _{q}}{\left[  n+1\right]  _{q}}\right)  ^{m}\right]
x.
\end{align*}
Hence an application of Theorem \ref{t:iterate} yields the following statement.

\begin{theorem}
Let $0<q\leq1$. Let $U_{n,q}$ be a sequence of genuine $q$%
-Bernstein--Durrmeyer operators. Then the pointwise approximation
\[
\left\vert U_{n,q}^{m}\left(  f;x\right)  -P\left(  f;x\right)  \right\vert
\leq k\omega_{2}\left(  f;\sqrt{\left(  1-\frac{\left[  2\right]  _{q}%
}{\left[  n+1\right]  _{q}}\right)  ^{m}x\left(  1-x\right)  }\right)
\]
holds true for all $x\in\left[  0,1\right]  $ and $f\in C\left[  0,1\right]  $.
\end{theorem}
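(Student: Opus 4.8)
The plan is to recognize this theorem as an immediate corollary of Theorem \ref{t:iterate}, so that the entire task reduces to checking that the genuine $q$-Bernstein--Durrmeyer operator $U_{n,q}$ satisfies the hypotheses (\ref{ccc1}) with the correct constants. First I would set
\[
a = 1 - \frac{\left[2\right]_q}{\left[n+1\right]_q}, \qquad b = \frac{\left[2\right]_q}{\left[n+1\right]_q},
\]
so that the second-moment formula recorded just above the statement reads exactly $U_{n,q}(e_2;x) = ax^2 + bx$.

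Then I would verify the structural requirements in turn. The operator $U_{n,q}$ is linear, positive, and maps $C\left[0,1\right]$ into itself (this is part of its construction in \cite{ms}), and the listed moments give $U_{n,q}(e_0)=e_0$ and $U_{n,q}(e_1;x)=x$. For the second moment I observe that the exact identity $U_{n,q}(e_2;x) = ax^2+bx$ trivially yields the inequality $U_{n,q}(e_2;x)\le ax^2+bx$ demanded by (\ref{ccc1}). It then remains to check the arithmetic side conditions on $a,b$: clearly $a+b=1$, and since $0<q\le 1$ we have $\left[2\right]_q = 1+q>0$ and $\left[n+1\right]_q>0$, so $b>0$; moreover for $n\ge 2$ one has $\left[n+1\right]_q>\left[2\right]_q$, whence $0<a<1$ and in particular $a,b\in\mathbb{R}\setminus\{0\}$.

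With every hypothesis confirmed, Theorem \ref{t:iterate} applies verbatim and gives
\[
\left\vert U_{n,q}^m(f;x) - P(f;x)\right\vert \le k\,\omega_2\!\left(f;\sqrt{a^m x(1-x)}\right),
\]
which is precisely the claimed estimate once $a^m = \left(1-\left[2\right]_q/\left[n+1\right]_q\right)^m$ is substituted.

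There is no deep obstacle here: the real content lives entirely in the second-moment computation, which is already supplied above the statement (and traces back to \cite{ms}). The only point requiring a moment's care is the sign and non-vanishing bookkeeping for $a$ and $b$ --- in particular confirming $a\ge 0$, so that $ax^2+bx\le x$ and the chain $x^2\le U_{n,q}(e_2;x)\le x$ used inside the proof of Theorem \ref{t:iterate} remains valid, together with $a\ne 0$, which forces the restriction $n\ge 2$ (the degenerate case $n=1$ gives $a=0$). Since $0<a<1$ for $n\ge 2$, the factor $a^m$ tends to $0$, so the argument of the modulus shrinks and the iterates $U_{n,q}^m$ converge to $P$ at the displayed rate.
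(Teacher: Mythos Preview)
Your proposal is correct and follows exactly the paper's approach: the paper simply records the moments $U_{n,q}(e_i)$ and then states that an application of Theorem~\ref{t:iterate} yields the result. Your write-up is in fact more careful than the paper's, since you explicitly verify the side conditions $a+b=1$, $a,b\neq 0$, and $0<a<1$ (noting the implicit restriction $n\ge 2$), which the paper leaves unspoken.
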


\subsection{Iterates of the $q$-Stancu operators ($0<q\leq1$)}

We apply now the above results for iterates of the $q$-Stancu operators%
\begin{align*}
S_{n,q}^{\left\langle \alpha,\beta,\gamma\right\rangle }  &  :C\left[
0,1\right]  \ni f\rightarrow%
{\displaystyle\sum\limits_{k=0}^{n}}
f\left(  \frac{\left[  k\right]  _{q}+\left[  \beta\right]  _{q}}{\left[
n\right]  _{q}+\left[  \gamma\right]  _{q}}\right)  p_{n,k}\left(
q;\alpha;\cdot\right)  \in P_{n},\ \ \\
p_{n,k}\left(  q;\alpha;x\right)   &  =\left[
\begin{array}
[c]{c}%
n\\
k
\end{array}
\right]  _{q}\frac{%
{\displaystyle\prod\nolimits_{i=0}^{k-1}}
\left(  x+\alpha\left[  i\right]  _{q}\right)
{\displaystyle\prod\nolimits_{s=0}^{n-k-1}}
\left(  x-q^{s}x+\alpha\left[  s\right]  _{q}\right)  }{%
{\displaystyle\prod\nolimits_{i=0}^{n-1}}
\left(  1+\alpha\left[  i\right]  _{q}\right)  }.
\end{align*}
$q$-Stancu operators $S_{n,q}^{\left\langle \alpha,0,0\right\rangle }$ were
introduced and studied in \cite{Nowak}. Let $0<q\leq1$ and $\alpha\geq0.$ Then
we have%
\begin{align*}
S_{n,q}^{\left\langle \alpha,0,0\right\rangle }\left(  1;x\right)   &
=1,\ \ \ S_{n,q}^{\left\langle \alpha,0,0\right\rangle }\left(  t;x\right)
=x,\\
\left(  S_{n,q}^{\left\langle \alpha,0,0\right\rangle }\right)  ^{m}\left(
t^{2};x\right)   &  =\left(  \left(  1-\frac{1}{\left[  n\right]  _{q}%
}\right)  \frac{1}{1+\alpha}\right)  ^{m}x^{2}+\left[  1-\left(  \left(
1-\frac{1}{\left[  n\right]  _{q}}\right)  \frac{1}{1+\alpha}\right)
^{m}\right]  x.
\end{align*}
It is worthwhile to mention that already in 1978 G. Mastroianni \& M. R.
Occorsio \cite{mas} have introduced and investigated the iterates of
$S_{n}^{\left\langle \alpha,0,0\right\rangle }$ ($q=1$) by extending a
procedure used by R. P. Kelisky \& T. J. Rivlin for the Bernstein operators.

$q$-Stancu operators $S_{n,q}^{\left\langle \alpha,\beta,\gamma\right\rangle
}$ were introduced and studied in \cite{mah11}. Let $\alpha=\beta
=0,\ \gamma>0.$ Then it is easy to show%
\begin{align*}
S_{n,q}^{\left\langle 0,0,\gamma\right\rangle }\left(  e_{0};x\right)   &
=1,\ \ \ S_{n,q}^{\left\langle 0,0,\gamma\right\rangle }\left(  e_{1}%
;x\right)  =\frac{\left[  n\right]  _{q}}{\left[  n\right]  _{q}+\left[
\gamma\right]  _{q}}x,\\
S_{n,q}^{\left\langle 0,0,\gamma\right\rangle }\left(  e_{2};x\right)   &
=\frac{\left[  n\right]  _{q}^{2}-\left[  n\right]  _{q}}{\left(  \left[
n\right]  _{q}+\left[  \gamma\right]  _{q}\right)  ^{2}}x^{2}+\frac{\left[
n\right]  _{q}}{\left(  \left[  n\right]  _{q}+\left[  \gamma\right]
_{q}\right)  ^{2}}x,\\
\left(  S_{n,q}^{\left\langle 0,0,\gamma\right\rangle }\right)  ^{m}\left(
e_{1};x\right)   &  =\left(  \frac{\left[  n\right]  _{q}}{\left[  n\right]
_{q}+\left[  \gamma\right]  _{q}}\right)  ^{m}x,\\
\left(  S_{n,q}^{\left\langle 0,0,\gamma\right\rangle }\right)  ^{m}\left(
e_{2};x\right)   &  =\left(  \frac{\left[  n\right]  _{q}^{2}-\left[
n\right]  _{q}}{\left(  \left[  n\right]  _{q}+\left[  \gamma\right]
_{q}\right)  ^{2}}\right)  ^{m}x^{2}\\
&  +\frac{\left[  n\right]  _{q}^{m}}{\left(  \left[  n\right]  _{q}+\left[
\gamma\right]  _{q}\right)  ^{m+1}}\sum_{i=0}^{m-1}\left(  \frac{\left[
n\right]  _{q}-1}{\left[  n\right]  _{q}+\left[  \gamma\right]  _{q}}\right)
^{i}x.
\end{align*}
We arrive at the following theorem.

\begin{theorem}
Let $0<q\leq1.$ Let $S_{n,q}^{\left\langle \alpha,\beta,\gamma\right\rangle }$
be a sequence of $q$-Stancu operators.

\begin{enumerate}
\item If $\alpha\geq0,$ $\beta=\gamma=0$ then the pointwise approximation
\[
\left\vert \left(  S_{n,q}^{\left\langle \alpha,0,0\right\rangle }\right)
^{m}\left(  f;x\right)  -P\left(  f;x\right)  \right\vert \leq k\omega
_{2}\left(  f;\sqrt{\left(  \left(  1-\frac{1}{\left[  n\right]  _{q}}\right)
\frac{1}{1+\alpha}\right)  ^{m}x\left(  1-x\right)  }\right)
\]
holds true for all $x\in\left[  0,1\right]  $ and $f\in C\left[  0,1\right]  $.

\item If $\alpha=\beta=0,\ \gamma>0$ then%
\[
\left\vert \left(  S_{n,q}^{\left\langle 0,0,\gamma\right\rangle }\right)
^{m}\left(  f;x\right)  -f\left(  0\right)  \right\vert \leq k\omega
_{2}\left(  f,\sqrt{\lambda_{m}\left(  x\right)  }\right)  +k\left\Vert
f\right\Vert \delta_{m}\left(  x\right)
\]
holds true \textit{for }$x\in\left[  0,1\right]  $\textit{ and }$f\in C\left[
0,1\right]  $\textit{, where }$k$ is an absolute constant and%
\begin{align*}
\lambda_{m}\left(  x\right)   &  =\max\left\{  \left(  S_{n,q}^{\left\langle
0,0,\gamma\right\rangle }\right)  ^{m}\left(  e_{1};x\right)  ,\left(
S_{n,q}^{\left\langle 0,0,\gamma\right\rangle }\right)  ^{m}\left(
e_{2};x\right)  \right\}  ,\\
\delta_{m}\left(  x\right)   &  =\left(  S_{n,q}^{\left\langle 0,0,\gamma
\right\rangle }\right)  ^{m}\left(  e_{1};x\right)  .
\end{align*}

\end{enumerate}
\end{theorem}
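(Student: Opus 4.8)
The plan is to obtain both parts as immediate applications of the two general iteration theorems already proved, since the relevant moments of the $q$-Stancu operators have been recorded just above the statement; the only genuine work is to check that these moments satisfy the hypotheses of Theorem \ref{t:iterate} (for part (1)) and Theorem \ref{t:iterate2} (for part (2)), and then to read off the quantitative estimates verbatim.

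For part (1), I would apply Theorem \ref{t:iterate} with $T=S_{n,q}^{\left\langle \alpha,0,0\right\rangle }$. The listed identities $S_{n,q}^{\left\langle \alpha,0,0\right\rangle }\left(e_0\right)=e_0$ and $S_{n,q}^{\left\langle \alpha,0,0\right\rangle }\left(e_1;x\right)=x$ supply the first two requirements of (\ref{ccc1}). Setting $m=1$ in the displayed iterate formula identifies the single-step second moment as $S_{n,q}^{\left\langle \alpha,0,0\right\rangle }\left(e_2;x\right)=ax^2+bx$ with $a=\left(1-\frac{1}{\left[n\right]_q}\right)\frac{1}{1+\alpha}$ and $b=1-a$, so $a+b=1$ holds automatically. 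For $n\geq2$ and $\alpha\geq0$ one has $0<1-\frac{1}{\left[n\right]_q}<1$ and $0<\frac{1}{1+\alpha}\leq1$, hence $0<a<1$ and $b=1-a\in\left(0,1\right)$; in particular $a,b\in R\backslash\left\{0\right\}$, so (\ref{ccc1}) is met. Theorem \ref{t:iterate} then gives (\ref{est5}) with this value of $a$, which is precisely the claimed estimate.

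For part (2), I would apply Theorem \ref{t:iterate2} with $T=S_{n,q}^{\left\langle 0,0,\gamma\right\rangle }$. Here $S_{n,q}^{\left\langle 0,0,\gamma\right\rangle }\left(e_0\right)=e_0$, and since $\gamma>0$ forces $\left[\gamma\right]_q\geq0$, the contraction factor $\frac{\left[n\right]_q}{\left[n\right]_q+\left[\gamma\right]_q}$ lies in $\left(0,1\right)$, so $S_{n,q}^{\left\langle 0,0,\gamma\right\rangle }\left(e_1;x\right)\leq x$. It remains to verify the two vanishing conditions in (\ref{cc111}). The first is immediate from $\left(S_{n,q}^{\left\langle 0,0,\gamma\right\rangle }\right)^m\left(e_1;x\right)=\left(\frac{\left[n\right]_q}{\left[n\right]_q+\left[\gamma\right]_q}\right)^m x$, whose norm tends to $0$ geometrically.

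The step I expect to need the most care is showing $\lim_{m\to\infty}\left\Vert\left(S_{n,q}^{\left\langle 0,0,\gamma\right\rangle }\right)^m\left(e_2\right)\right\Vert=0$. In the displayed formula for the second moment of the iterate the leading coefficient is $\frac{\left[n\right]_q^2-\left[n\right]_q}{\left(\left[n\right]_q+\left[\gamma\right]_q\right)^2}<1$, because $\left[n\right]_q^2-\left[n\right]_q<\left[n\right]_q^2\leq\left(\left[n\right]_q+\left[\gamma\right]_q\right)^2$, so its $m$-th power vanishes. For the remaining term I would bound the geometric sum $\sum_{i=0}^{m-1}\left(\frac{\left[n\right]_q-1}{\left[n\right]_q+\left[\gamma\right]_q}\right)^i$ by the constant $\frac{\left[n\right]_q+\left[\gamma\right]_q}{1+\left[\gamma\right]_q}$ and note that the prefactor $\frac{\left[n\right]_q^m}{\left(\left[n\right]_q+\left[\gamma\right]_q\right)^{m+1}}=\frac{1}{\left[n\right]_q+\left[\gamma\right]_q}\left(\frac{\left[n\right]_q}{\left[n\right]_q+\left[\gamma\right]_q}\right)^m$ decays to $0$; hence the whole term tends to $0$ uniformly in $x$. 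With (\ref{cc111}) verified, Theorem \ref{t:iterate2} delivers (\ref{est51}) with $\lambda_m$ and $\delta_m$ exactly as stated, which completes the proof.
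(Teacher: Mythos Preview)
Your proposal is correct and follows exactly the paper's intended route: the theorem is stated immediately after the moment computations with the phrase ``We arrive at the following theorem,'' so the proof is simply an application of Theorem~\ref{t:iterate} for part~(1) and Theorem~\ref{t:iterate2} for part~(2), using the listed values of $T(e_i)$ and $T^m(e_i)$. Your verification of the hypotheses (in particular the decay of $\bigl(S_{n,q}^{\langle 0,0,\gamma\rangle}\bigr)^m(e_2)$ in part~(2)) is in fact more detailed than anything the paper spells out; the only small slip is writing $[\gamma]_q\geq 0$ where you need and in fact use $[\gamma]_q>0$, which does hold for $\gamma>0$ and $0<q\le 1$.
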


\bigskip

\end{document}